\newtheorem*{namedtheorem}{\theoremname}
\newcommand{\theoremname}{testing}
\newtheorem{theorem}{Theorem}
\newtheorem{proposition-definition}[theorem]
{Proposition-Definition}
\newtheorem{lemma}[theorem]{Lemma}
\theoremstyle{definition}
\newtheorem{example}[theorem]{Example}
\newtheorem{remark}[theorem]{Remark}
\theoremstyle{remark}
\renewcommand{\mathcal}{\mathscr}
\newcommand\CC{\mathbb{C}} 
\newcommand\GG{\mathbb{G}}
 \newcommand\PP{\mathbb{P}}
\newcommand\QQ{\mathbb{Q}}
 \newcommand\ZZ{\mathbb{Z}}
\newcommand\arr{\ifinner\to\else\longrightarrow\fi}
\newcommand\arrto{\ifinner\mapsto\else\longmapsto\fi}
\newcommand\Aut{\operatorname{Aut}}
\newcommand{\Ind}{\operatorname{Ind}}
\newcommand\Span{\operatorname{Span}}
\def\displaytimes_#1{\mathrel{\mathop{\times}\limits_{#1}}}
\def\displayotimes_#1{\mathrel{\mathop{\bigotimes}\limits_{#1}}}
\newcommand\Bir{\operatorname{Bir}}
\newcommand\rdim{\operatorname{rdim}}
\newcommand\Sym{\operatorname{S}}
\newcommand\Cr{\operatorname{Cr}}
\newcommand{\GL}{\mathrm{GL}}
\newcommand{\ed}{\operatorname{ed}}
\begin{document}

\title[Jordan property and essential dimension]{The Jordan property of Cremona groups and essential dimension}

 \author{Zinovy Reichstein}
\address{Department of Mathematics\\
 University of British Columbia\\
 Vancouver, BC V6T 1Z2\\Canada}
 \email{reichst@math.ubc.ca}
\thanks{Partially supported by
 National Sciences and Engineering Research Council of
 Canada Discovery grant 253424-2017.}

\keywords{Jordan property, Cremona group, essential dimension, representation dimension}
\subjclass[2010]{14E07, 20C05}


\begin{abstract} We use a recent advance in birational geometry to prove
new lower bounds on the essential dimension of some finite groups.
\end{abstract}

\maketitle

\section{Introduction}

A abstract group $\Gamma$ is called {\em Jordan} if there exists an integer $j$ such that every finite subgroup $G \subset \Gamma$ has a 
normal abelian subgroup $A$ of index $[G: A] \leqslant j$. This definition, due to V. L. Popov~\cite{popov1, popov2}, 
was motivated by the classical theorem of Camille Jordan~\cite{jordan} which asserts that $\GL_n(k)$ is Jordan, 
and by a theorem of J.-P.~Serre~\cite{serre} which asserts that the Cremona group $\Cr_2(k)$ is also Jordan. Here and throughout this note
$k$ denotes a base field of characteristic $0$. The Cremona group $\Cr_2 = \Bir(\PP^2)$ is the group of birational 
automorphisms of the projective plane. Serre asked whether the higher Cremona groups $\Cr_n = \Bir(\PP^n)$ are Jordan as well. 
Our starting point is the following remarkable theorem of Y.~Prokhorov, C.~Shramov and C.~Birkar, which asserts that
groups of birational isomorphisms of rationally connected varieties of fixed dimension are ``uniformly Jordan". 

\begin{theorem} \label{thm.jordan} {\rm (\cite[Corollary 1.3]{birkar})}
For every positive integer $n \geqslant 1$ there exists a positive integer $j(n)$ with the following property.
Let $G$ be a finite subgroup of the group $\Bir(X)$ of birational automorphisms of an $n$-dimensional rationally connected variety $X$.
Then $G$ has a normal abelian subgroup $A$ such that $[G: A] \leqslant j(n)$.
\end{theorem}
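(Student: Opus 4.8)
The plan is to move the problem from birational to \emph{regular} automorphisms and then exploit the boundedness of Fano varieties of fixed dimension. Two preliminary reductions cost only a controlled factor in the index. First, it suffices to produce an abelian subgroup $A \subseteq G$ of bounded index, not necessarily normal: replacing $A$ by its normal core $\bigcap_{g \in G} gAg^{-1}$ yields a normal abelian subgroup whose index is at most $[G:A]!$, since $G$ acts on the finitely many cosets of $A$. Second, a finite group acting birationally on $X$ acts regularly on some smooth projective model, by standard regularization via $G$-equivariant resolution of singularities; as rational connectedness is a birational invariant, I may assume $G$ acts faithfully and biregularly on a smooth projective rationally connected $n$-fold.

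I would then induct on $n$ by running the $G$-equivariant Minimal Model Program, which produces a $G$-equivariant birational contraction to a $G$-Mori fibre space $\phi \colon Y \to Z$, where $Y$ has terminal singularities, $-K_Y$ is $\phi$-ample, and the $G$-invariant relative Picard number equals one. If $\dim Z = 0$, then $Y$ is a Fano variety of dimension $n$; if $\dim Z > 0$, then $Z$ is rationally connected of dimension $< n$ and the general fibre of $\phi$ is a Fano variety of smaller dimension, with mild (indeed $\epsilon$-log terminal, for a uniform $\epsilon$) singularities. In either case matters are reduced, up to the base $Z$, to finite groups acting on Fano varieties of bounded dimension with controlled singularities.

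The decisive ingredient is Birkar's boundedness theorem (the BAB conjecture): for fixed dimension, the Fano varieties with $\epsilon$-log terminal singularities form a bounded family. Boundedness supplies uniform integers $m$ and $N$, depending only on $n$, so that $-mK_Y$ is very ample and embeds $Y$ into $\PP^N$. Because $-K_Y$ is canonically attached to $Y$, this embedding is $G$-equivariant, giving an injection $G \into \PGL_{N+1}(k)$. Since $\PGL_{N+1}$ is a linear algebraic group, it embeds into some $\GL_M(k)$ and is therefore Jordan by Jordan's theorem, with constant depending only on $N$, hence only on $n$. (Equivalently, boundedness forces $\Aut(Y)$ to have connected component of bounded dimension and component group of bounded order.) This settles the base case $\dim Z = 0$, and the same analysis applied to a fibre controls the subgroup $G_0 \subseteq G$ acting trivially on $Z$.

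The remaining, and I expect hardest, step is to assemble a single uniform constant $j(n)$ from the extension $1 \to G_0 \to G \to \overline{G} \to 1$, where $\overline{G} \subseteq \Aut(Z)$ is governed by the inductive hypothesis and $G_0$ by the fibre analysis. The obstruction is group-theoretic: an extension of an abelian-by-bounded group by an abelian-by-bounded group need not be abelian-by-bounded with any controlled index, as iterated wreath products show; and simply seeking a $G$-orbit on $Y$ of uniformly bounded length already fails for Fano varieties, since finite abelian subgroups of $\PGL_{N+1}$ may act with no small orbit, the Heisenberg subgroups being the model example. The resolution must use the extra rigidity from boundedness — that $G_0$ embeds in a linear group $\GL_M(k)$ of bounded size, compatibly with the $\overline{G}$-action — running the combination through a $G$-equivariant projective bundle $\PP(E) \to Z$ of bounded rank, where $E$ is built from the relative pluri-anticanonical system, and reducing to a controlled extension lemma. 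Granting Birkar's theorem, it is precisely this uniform bookkeeping across the Mori fibration, ensuring the constants do not blow up with the dimension, that forms the core of the argument.
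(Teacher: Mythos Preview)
The paper does not prove Theorem~\ref{thm.jordan} at all: it is quoted from the literature as input. Immediately after the statement the authors write that Prokhorov and Shramov proved it assuming the BAB conjecture, and that Birkar then proved BAB; the theorem is used in this paper purely as a black box to derive Theorems~\ref{thm1} and~\ref{thm2}. So there is no ``paper's own proof'' to compare your proposal against.

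That said, your sketch is a reasonable caricature of the Prokhorov--Shramov argument: regularize the action, run the $G$-equivariant MMP to a $G$-Mori fibre space, and invoke BAB to embed the Fano output into a fixed $\PGL_{N+1}$, where classical Jordan applies. Two remarks. First, your normal-core step with the factorial bound is unnecessarily crude; as the paper itself notes in a later remark, an abelian subgroup of index $\leqslant i$ yields a \emph{normal} abelian subgroup of index $\leqslant i^2$ (Isaacs), and in any case Prokhorov--Shramov obtain normality more directly. Second, your worry that ``seeking a $G$-orbit of uniformly bounded length already fails'' and that the assembly across the fibration is the unresolved core is not quite how the actual argument goes: the Prokhorov--Shramov route passes through showing that, conditional on BAB, a subgroup of bounded index has a fixed point on a suitable model, and then linearizes near that fixed point. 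Your proposal correctly isolates BAB as the decisive input but misidentifies where the real work lies.
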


Prokhorov and Shramov~\cite{ps16} proved this theorem assuming the Borisov-Alexeev-Borisov (BAB) conjecture. 
The BAB conjecture was subsequently proved by Birkar~\cite{birkar}. In this note we will deduce some consequences 
of Theorem~\ref{thm.jordan} concerning essential dimension of finite groups. 

Let $G$ be a finite group. Recall that the {\em representation dimension} $\rdim_k(G)$ is the minimal dimension of a faithful 
representation of $G$ defined over $k$, i.e., the smallest positive integer $r$ such that $G$ is isomorphic to 
a subgroup of $\GL_r(k)$. The {\em essential dimension} $\ed_k(G)$ is the minimal dimension of a faithful linearizable $G$-variety
defined over $k$. Here by a faithful $G$-variety we mean an algebraic variety $X$ with a faithful action of $G$.
We say that $X$ is linearizable if there exists a $G$-equivariant dominant rational map $V \dasharrow X$, 
where $V$ is a vector space with a linear action for $G$. 

It is clear from these definitions that
\begin{equation} \label{e.ed-rd} \ed_k(G) \leqslant \rdim_k(G).\end{equation}
We will write $\ed(G)$ and $\rdim(G)$ in place of $\ed_k(G)$ and $\rdim_k(G)$, respectively, when
the reference of $k$ is clear from the context. Equality in~\eqref{e.ed-rd} holds in two interesting cases: 

\begin{itemize}
\item if $G$ is abelian and $k$ contains 
a primitive $e$th root of unity, where $e$ is the exponent of $G$ (see~\cite[Theorem 6.1]{bur}), or 

\item
if $G$ is a $p$-group and $k$ contains
a primitive $p$th root of unity (see~\cite{km08}). 
\end{itemize}

For other finite groups, $\ed(G)$ and $\rdim(G)$ can diverge. 
Our first main result shows that they do not diverge too far, assuming $k$ contains suitable roots of unity.

\begin{theorem} \label{thm1} Let $r(n) = nj(n)$, where $j(n)$ is the Jordan constant from Theorem~\ref{thm.jordan}. 
Suppose $G$ is a finite group of exponent $e$ and the base field $k$ contains a primitive $e$th root of unity. 

(a) If $\ed_k(G) \leqslant n$, then $\rdim_k(G) \leqslant r(n)$. 

(b) Moreover, if $\ed_k(G) \leqslant n$, then $G$ is isomorphic to a finite subgroup 
of $\GG_m^{r(n)} \rtimes \Sym_{r(n)}$, where the symmetric group $\Sym_{r(n)}$ acts on $\GG_m^{r(n)}$ by permuting the factors.
\end{theorem}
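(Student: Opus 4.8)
The plan is to unwind the hypothesis $\ed_k(G)\le n$ into a faithful $G$-action on a rationally connected variety, apply Theorem~\ref{thm.jordan} to extract a normal abelian subgroup, and then induce from it a \emph{monomial} representation of $G$ whose dimension and shape give both (a) and (b). First I would record that, by the definition of essential dimension, $\ed_k(G)\le n$ yields a faithful linearizable $G$-variety $X$ with $\dim X\le n$, i.e.\ a $G$-equivariant dominant rational map $V\dasharrow X$ from a representation $V$ of $G$. Since $V$ is rational and dominates $X$, the variety $X$ is unirational, hence rationally connected (recall $\cha k=0$), and the faithful action gives an embedding $G\hookrightarrow\Bir(X)$. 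Replacing $X$ by $X\times\PP^{\,n-\dim X}$, with $G$ acting on the first factor, I may assume $\dim X=n$ without losing faithfulness, linearizability, or rational connectedness; Theorem~\ref{thm.jordan} then supplies a normal abelian subgroup $A\trianglelefteq G$ with $[G:A]\le j(n)$.

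Next I would bound $\rdim_k(A)$. Restricting a faithful linearizable $G$-variety of dimension $\ed_k(G)$ along the inclusion $A\le G$ shows it is simultaneously a faithful linearizable $A$-variety, so $\ed_k(A)\le\ed_k(G)\le n$. As $\exp(A)$ divides $e$, the field $k$ contains a primitive $\exp(A)$-th root of unity, so the abelian equality case recorded after~\eqref{e.ed-rd} (\cite[Theorem 6.1]{bur}) applies and gives $\rdim_k(A)=\ed_k(A)\le n$. Because $A$ is abelian and $k$ has enough roots of unity, every $k$-representation of $A$ splits as a sum of one-dimensional characters $A\to k^\ast$; hence there exist characters $\chi_1,\dots,\chi_d$ with $d\le n$ and $\bigcap_{i}\ker\chi_i=1$.

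Finally I would induce. Set $\rho=\chi_1\oplus\cdots\oplus\chi_d$ and consider $W=\Ind_A^G\rho=\bigoplus_{i=1}^{d}\Ind_A^G\chi_i$. Choosing coset representatives $g_1,\dots,g_m$ for $G/A$, the action $h\cdot(g_j\otimes 1)=\chi_i(a)\,(g_{\sigma(j)}\otimes 1)$, where $hg_j=g_{\sigma(j)}a$, shows each $\Ind_A^G\chi_i$ is monomial of dimension $[G:A]$ with nonzero entries in $k^\ast$. Thus $W$ is a monomial $k$-representation of dimension $d\,[G:A]\le n\,j(n)=r(n)$, so the homomorphism $G\to\GL(W)$ lands in the group of monomial matrices $\GG_m^{r(n)}\rtimes\Sym_{r(n)}$ (after padding the dimension up to $r(n)$ by the standard inclusion on the first $d\,[G:A]$ coordinates). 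This yields part (b); part (a) follows at once, since $W$ is a representation of dimension $\le r(n)$.

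The step requiring the most care, which I would treat as the crux, is the faithfulness of $W$. I would identify $\ker(\Ind_A^G\rho)$ with the normal core $\bigcap_{g\in G} g\,(\ker\rho)\,g^{-1}$ of $\ker\rho$ in $G$: examining the block $g_1\otimes V$ (with $g_1=1$) shows that an element acting trivially on $W$ must lie in $A$ and in $\ker\rho$, and the same computation on each block gives the stated intersection. Since $\ker\rho=\bigcap_i\ker\chi_i=1$, this core is trivial and $W$ is faithful. Two subsidiary points also warrant explicit verification: that the subgroup monotonicity $\ed_k(A)\le\ed_k(G)$ really does follow from the given definition by restriction of $V$ and of the map $V\dasharrow X$, and that the reduction to $\dim X=n$ is harmless, so that the single constant $j(n)$ governs every $G$ with $\ed_k(G)\le n$.
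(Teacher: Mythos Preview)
Your proof is correct and follows essentially the same route as the paper: apply Theorem~\ref{thm.jordan} to the rationally connected variety $X$ dominated by a linear representation, use $\rdim_k(A)=\ed_k(A)\le n$ for the resulting normal abelian subgroup, and then induce a faithful monomial representation of dimension at most $n\,j(n)$. You are more careful than the paper on two points it leaves implicit---the rational connectedness of $X$ and the padding $X\times\PP^{\,n-\dim X}$ to make the single constant $j(n)$ apply---but the argument is otherwise identical.
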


To place Theorem~\ref{thm1}(a) into the context of what is currently known about essential dimension of finite groups, 
let us assume for simplicity that $k$ is algebraically closed. Let $G$ be a finite group, 
$p$ be a prime, and $G[p]$ be a Sylow $p$-subgroup of $G$. As we 
mentioned above, $\ed(G[p]) = \rdim(G[p])$ by the Karpenko-Merkurjev theorem~\cite{km08}, and $\rdim(G[p])$ 
can be computed, at least in principle, by the methods of representation theory of finite groups.
This way we obtain a lower bound 
\begin{equation} \label{e.sylow}
\ed(G) \geqslant \max_p \, \rdim(G[p]).
\end{equation}
One can then try to prove a matching upper bound by constructing an explicit $d$-dimensional
faithful linearizable $G$-variety of dimension $\max_p \, \rdim(G[p])$.
In most of the cases where the exact value of $\ed(G)$ is known, it was established using
this strategy. 

There are, however, finite groups $G$ for which the inequality~\eqref{e.sylow} is strict.
All known proofs of stronger lower bounds of the form $\ed(G) > d$
appeal to the classification of finite subgroups of $\Bir(X)$, 
where $X$ ranges over the $d$-dimensional unirational (or rationally connected)
varieties.
Such classifications is available only for $d = 1$ (see~\cite[Chapter 1]{klein}) and $d= 2$, and 
the latter is rather complicated; see~\cite{di}.  
For $d = 3$ there is only a partial classification (see \cite{prokhorov}),
and for $d \geqslant 4$ even a partial classification is currently out of reach.
Lower bounds of the form $\ed(G) > d$ proved by this method (for suitable finite groups $G$), 
can be found 

\begin{itemize}

\smallskip
\item
in~\cite[Theorem 6.2]{bur} for $d = 1$, 

\smallskip
\item
in~\cite[Proposition 3.6]{serre}, \cite{duncan} for $d = 2$,
and 

\smallskip
\item
in~\cite{duncan-a7}, \cite{beauville}, \cite{prokhorov17} for $d = 3$. 
\end{itemize}

\smallskip
\noindent
For an overview, see~\cite[Section 6]{icm}.
This paper is in a similar spirit, with Theorem~\ref{thm.jordan} used in place of the above-mentioned classifications.

As a consequence of Theorem~\ref{thm1}(a), we will obtain the following.

\begin{theorem} \label{thm2} Let $\ZZ/ n\ZZ$ be a cyclic group of order $n$ and $H_n$ be a subgroup of $\Aut(\ZZ / n \ZZ) = (\ZZ/n \ZZ)^*$ 
for $n = 1, 2, 3, \ldots$. If $\lim_{n \to \infty} \, |H_n| = \infty$, then
$\lim_{n \to \infty} \, \ed_k((\ZZ/n \ZZ) \rtimes H_n) = \infty$ for any field $k$ of characteristic $0$.
\end{theorem}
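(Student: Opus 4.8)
The plan is to prove the contrapositive of the statement with the help of Theorem~\ref{thm1}(a): a uniform bound on $\ed(G_n)$ is first converted into a uniform bound on $\rdim(G_n)$, and then a short representation-theoretic argument shows that bounded representation dimension forces $|H_n|$ to stay bounded. Before applying Theorem~\ref{thm1}(a) I would reduce to the case of an algebraically closed base field. For any extension $K/k$ one has $\ed_K(G) \leq \ed_k(G)$, since base-changing a faithful linearizable $G$-variety over $k$ to $K$ produces a faithful linearizable $G$-variety over $K$ of the same dimension. Writing $G_n = (\ZZ/n\ZZ) \rtimes H_n$, it therefore suffices to show $\lim_{n\to\infty}\ed_{\overline{k}}(G_n) = \infty$, because $\ed_k(G_n) \geq \ed_{\overline{k}}(G_n)$. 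The point of passing to $\overline{k}$ is that it contains a primitive $e$th root of unity for every $e$, so the hypothesis of Theorem~\ref{thm1}(a) is satisfied by every $G_n$, whatever its exponent.

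Now suppose, for contradiction, that $\ed_{\overline{k}}(G_n)$ does not tend to infinity. Then there is an integer $d$ and an infinite set $S$ of indices with $\ed_{\overline{k}}(G_n) \leq d$ for all $n \in S$. By Theorem~\ref{thm1}(a) we obtain $\rdim_{\overline{k}}(G_n) \leq r(d) =: N$ for every $n \in S$; that is, each such $G_n$ acts faithfully on a $\overline{k}$-vector space $V_n$ with $\dim_{\overline{k}} V_n \leq N$. I claim that this forces $|H_n| \leq N!$, which contradicts $\lim_{n\to\infty}|H_n| = \infty$ and finishes the proof.

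To establish the claim I would restrict this faithful action to the normal cyclic subgroup $C = \ZZ/n\ZZ$. Since $\overline{k}$ is algebraically closed, $V_n = \bigoplus_{\chi \in \widehat{C}} V_\chi$ decomposes into character eigenspaces for $C$, and faithfulness of the restricted action is equivalent to the statement that $\Sigma = \{\chi : V_\chi \neq 0\}$ generates the character group $\widehat{C}$. Conjugation by elements of $H_n \leq \Aut(C)$ inside $G_n$ permutes the eigenspaces $V_\chi$ according to the natural action of $H_n \leq (\ZZ/n\ZZ)^*$ on $\widehat{C}$, and hence permutes the finite set $\Sigma$. If some $h \in H_n$ fixes every element of $\Sigma$, then it fixes the subgroup generated by $\Sigma$, which is all of $\widehat{C}$, so $h$ acts trivially on $\widehat{C}$ and therefore $h = \id$ in $\Aut(C)$; thus $H_n$ acts faithfully on $\Sigma$. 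Since the eigenspaces are independent, $|\Sigma| \leq \dim_{\overline{k}} V_n \leq N$, so we obtain an embedding $H_n \hookrightarrow \Sym_{|\Sigma|} \leq \Sym_N$ and hence $|H_n| \leq N!$, as claimed.

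The main obstacle here is conceptual rather than computational: Theorem~\ref{thm1}(a) is only available when the base field contains enough roots of unity, while the exponents of the groups $G_n$ are unbounded, so the theorem cannot be applied directly over a fixed $k$. The monotonicity $\ed_{\overline{k}}(G_n) \leq \ed_k(G_n)$ is precisely what allows one to sidestep this and carry out the whole argument over $\overline{k}$; everything after that step is the elementary eigenspace-permutation estimate above.
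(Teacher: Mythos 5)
Your proposal is correct, and the overall skeleton matches the paper's: reduce to a base field containing all roots of unity via $\ed_K(G)\leq \ed_k(G)$, then run Theorem~\ref{thm1}(a) in contrapositive form to convert a bound on essential dimension into a bound on representation dimension. Where you genuinely diverge is in the step ``bounded $\rdim(G_n)$ forces bounded $|H_n|$''. The paper proves this via Lemma~\ref{lem.rep}: it decomposes $H_n$ into cyclic factors of distinct prime-power orders $q_i$ with multiplicities $a_i$, bounds each $a_i$ by restricting to $H_n$ itself, and bounds each $q_i$ by projecting onto a suitable prime-power component $\ZZ/p_j^{e_j}\ZZ$ of $\ZZ/n\ZZ$ on which the factor $\ZZ/q_i\ZZ$ acts faithfully, so that a faithful character of $\ZZ/p_j^{e_j}\ZZ$ has an $H$-orbit of size $q_i$; this shows only finitely many isomorphism types of $H_n$ can occur. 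You instead observe that in a faithful $N$-dimensional representation of $G_n$ the set $\Sigma$ of characters of $\ZZ/n\ZZ$ that appear must generate the dual group, that $H_n$ permutes $\Sigma$, and that an automorphism fixing a generating set pointwise is trivial, yielding a faithful map $H_n\hookrightarrow \Sym_{|\Sigma|}$ and hence the explicit bound $|H_n|\leq N!$ with $N=r(d)$. All the individual steps check out: faithfulness of $C=\ZZ/n\ZZ$ on $V_n$ is indeed equivalent to $\langle\Sigma\rangle=\widehat{C}$, and the identification $\Aut(C)\cong\Aut(\widehat{C})$ makes the faithfulness of the $H_n$-action on $\Sigma$ immediate. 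Your route avoids both the structure theory of finite abelian groups and the prime-power reduction of Lemma~\ref{lem.rep}, and it buys an explicit (if crude, factorial rather than linear) quantitative bound; the paper's orbit argument gives the sharper estimate $\rdim\geq|\phi(H)|$ in the prime-power case, which is why it needs the extra bookkeeping with the decompositions of $n$ and $H_n$. Either argument completes the proof.
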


In particular, $\lim_{n \to \infty} \, \ed_{\CC}((\ZZ/n \ZZ) \rtimes (\ZZ/ n \ZZ)^*) = \infty$. 
In the case where $n = p$ is a prime, all 
Sylow subgroups of $(\ZZ/p \ZZ) \rtimes (\ZZ/ p \ZZ)^*$ are cyclic, 
so~\eqref{e.sylow} reduces to the vacuous lower bound \[ \ed_{\CC}((\ZZ/p \ZZ) \rtimes (\ZZ/ p \ZZ)^*) \geqslant 1. \]
It was not previously known that $\ed((\ZZ/p \ZZ) \rtimes (\ZZ/ p \ZZ)^*) > 3$ for any prime $p$.

\section{Proof of Theorem~\ref{thm1}}
\label{sect2}

Let $G \to \GL(V)$ be a faithful linear representation of $G$. By the definition of essential dimension there 
exists a $G$-equivariant dominant rational map $V \dasharrow X$ such that $G$ acts faithfully on $X$ and $\dim(X) = \ed(G) \leqslant n$. 
By Theorem~\ref{thm.jordan}, there exists a normal abelian subgroup $A \triangleleft G$ such that $[G:A] \leqslant j(n)$. 

As we mentioned above, when $A$ is abelian, and $k$ has a primitive $e$th root of unity, we have
$\rdim(A) = \ed(A)$. Since $A$ is a subgroup of $G$, $\ed(A) \leqslant \ed(G) \leqslant n$. Thus there exists 
a faithful representation $W$ of $A$ of dimension $d \leqslant n$. The induced 
representation $V = \Ind_{A}^G(W)$ of $G$ is clearly faithful, and $\dim(V) = d [G: A] \leqslant n j(n) = r(n)$. 
Thus $\rdim(G) \leqslant \dim(V) \leqslant r(n)$. This proves (a).

To prove (b), note that in some basis $e_1, \dots, e_d$ of $W$, $A$ acts on $W$ by diagonal matrices. Choosing a set of representatives
$g_1, \dots, g_s$ for the cosets of $A$ in $G$, we see that the vectors $g_i e_j$ form a basis of $V$, as $i$ ranges from $1$ to $s$ and $j$ ranges from
$1$ to $d$. The group $G$ permutes the lines $\Span_k(g_i e_j)$ in $V$. The subgroup of $\GL(V)$ that fixed each of these lines individually is a maximal
torus $T = \GG_m^{ds}$. The subgroup of $\GL(V)$ that preserves this set of lines is the normalizer $N$ of $T$ in $\GL(V)$, where
$N \simeq T \rtimes \Sym_{ds}$. Thus our faithful representation $G \to \GL(V)$  embeds $G$ in $N$.
Since $s = [G: A] \leqslant j(n)$ and thus $ds \leqslant n j(n) = r(n)$, we can further 
embed $N$ into $(\GG_m)^{r(n)} \rtimes \Sym_{r(n)}$.
\qed

\begin{remark} Without the assumption that $k$ contains a primitive root $\zeta_e$ of unity of degree $e$,
our proof of Theorem~\ref{thm1}(a) only shows that if there exists a number $a_k(n)$ such that 
\[ \ed_k(A) \leqslant n \quad \Longrightarrow \quad \rdim_k(A) \leqslant a_k(n)  \]
for every finite abelian group $A$, then
\[ \ed_k(G) \leqslant n \quad \Longrightarrow \quad \rdim_k(G) \leqslant a_k(n) j(n).  \]
Note that $\Bir(X)(k) \subset \Bir(X)(\overline{k})$, where $\overline{k}$ is the algebraic closure of $k$,
we may assume that the Jordan constant $j(n)$ is the same for $k$ as for $\overline{k}$. On the other hand,
$a_k(n)$ may depend on $k$. Moreover, if $k$ is an arbitrary field of characteristic $0$, 
we do not know whether or not $a(n)$ exists. 
For example, if $p$ is a prime, then $\rdim_{\QQ}(\ZZ/p \ZZ) = p -1$ is not bounded from above, 
as $p$ increases, but it is not known whether or not $\ed_{\QQ}(\ZZ/p \ZZ)$ is bounded from above.
\end{remark}

\begin{remark} Finite subgroups of $\GG_m^2 \rtimes S$, for certain small finite groups $S$ play a prominent role in the classification of
finite groups of essential dimension $2$ (over $\CC$), due to A.~Duncan; see \cite[Theorem 1.1]{duncan}. Theorem~\ref{thm1}(b) suggests that this is not
an accident.
\end{remark}

\begin{remark} In the definition of the Jordan group we could have dropped the assumption that $A$ is normal: $\Gamma$ is Jordan if and only if there exists an integer $\tilde{j}$ such that every finite subgroup $G \subset \Gamma$ contains an abelian subgroup $A \subset G$ of index $[G: A] < \tilde{j}$. 
One usually refers to $j$ and $\tilde{j}$ as {\em the Jordan constant} and {\em the weak Jordan constant} for $\Gamma$, respectively. 
These constants are related by the inequalities $\tilde{j} \leqslant j \leqslant \tilde{j}^2$; see~\cite[Remark 1.2.2]{ps17}.
Indeed, if $G$ has an abelian subgroup of index $\leqslant i$, then it has a normal abelian subgroup of index $\leqslant i^2$; see~\cite[Theorem 1.41]{isaacs}. 

Now observe that our proof of Theorem~\ref{thm1}(a) does not use the fact that $A$ is normal. Thus we could have
defined $r(n)$ as $n \tilde{j}(n)$, rather than $n j(n)$, in the statement of Theorem~\ref{thm1}(a).
This will not make a difference in this paper, but may be helpful if one tries to find an explicit value for $r(n)$ for some
(or perhaps, even all) $n$. The constants $j(n)$ and $\tilde{j}(n)$ are largely mysterious, but some explicit values 
for $n = 2$ and $3$ can be found in~\cite{ps17}.
\end{remark}

\begin{remark} It is not true that $[G: Z(G)]$ is bounded from above, as $G$ ranges over the groups of essential dimension $\leqslant n$.
Here $Z(G)$ denotes the center of $G$. For example let $D_{2n}$ be the dihedral group of order $2n$. 
Then $\ed_{\CC}(D_{2n}) = 1$ for every odd integer $n$ (see~\cite[Theorem 6.2]{bur}), but $Z(D_{2n}) = 1$, and thus $[D_{2n}: Z(D_{2n})] = 2n$ 
is unbounded from above, as $n$ ranges over the odd integers.
\end{remark}

\section{Proof of Theorem~\ref{thm2}}
\label{sect3}

Let $l$ be the field obtained from $k$ by adjoining all roots of unity. Since \[ \ed_k(G) \geqslant \ed_{l}(G) \]
for every finite group $G$, we may replace $k$ by $l$ and thus assume that $k$ contains all roots of unity.
Under this assumption, we can restate Theorem~\ref{thm1}(a) as follows. Let $G_1, G_2, \dots$ be a sequence of finite groups. 
\begin{equation} \label{e.sequence}
\text{If $\lim_{n \to \infty} \, \rdim (G_n) = \infty$, then $\lim_{n \to \infty} \, \ed (G_n) = \infty$.} 
\end{equation}

The following lemma is elementary; we include a short proof for the sake of completeness.

\begin{lemma} \label{lem.rep} Let $q = p^a$ be a prime power,  
and $\phi \colon H \to \Aut(\ZZ/q \ZZ) = (\ZZ/ q \ZZ)^*$ be a group homomorphism.
Then $\rdim((\ZZ/ q \ZZ) \rtimes_{\phi} H) \geqslant |\phi(H)|$.
\end{lemma}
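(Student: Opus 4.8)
The plan is to prove the lower bound $\rdim((\ZZ/q\ZZ) \rtimes_\phi H) \geqslant |\phi(H)|$ by exhibiting a large abelian subgroup inside the semidirect product and using the fact that the representation dimension of an abelian group is bounded below by the number of distinct characters occurring in any faithful representation. Write $G = (\ZZ/q\ZZ) \rtimes_\phi H$, let $c$ be a generator of the cyclic normal subgroup $C = \ZZ/q\ZZ$, and set $m = |\phi(H)|$. The key structural observation is that $H$ acts on $C$ through $\phi(H) \subset (\ZZ/q\ZZ)^*$, so the $H$-conjugates of $c$ take exactly $m$ distinct values in $C$ (the $\phi(H)$-orbit of the generator under multiplication). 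First I would fix a faithful representation $\rho \colon G \to \GL(V)$ of dimension $\rdim(G)$ and restrict it to the normal cyclic subgroup $C$.

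Next I would analyze the restriction $\rho|_C$. Since $C$ is cyclic and $k$ contains the $q$th roots of unity, $\rho|_C$ diagonalizes, and because $\rho$ is faithful on $C$ the generator $c$ must act with at least one eigenvalue that is a primitive $q$th root of unity; call the corresponding character $\chi \colon C \to k^*$ with $\chi(c) = \zeta_q$ primitive. The central point is that because $C$ is \emph{normal} in $G$, the group $G$ permutes the isotypic components of $\rho|_C$ by conjugation: for $h \in H$, the character $\chi$ gets sent to the character $\chi^h$ defined by $\chi^h(x) = \chi(h^{-1} x h) = \chi(\phi(h)^{-1}\cdot x)$. As $h$ ranges over $H$, the characters $\chi^h$ take at least $m = |\phi(H)|$ distinct values, since $\chi^h(c) = \chi(\phi(h)^{-1} c) = \zeta_q^{\phi(h)^{-1}}$ and the exponents $\phi(h)^{-1}$ range over all $m$ elements of $\phi(H)^{-1} = \phi(H)$. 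Each of these $m$ distinct characters of $C$ must appear in $\rho|_C$, because the action of $h \in H$ carries a nonzero $\chi$-eigenvector to a nonzero $\chi^h$-eigenvector.

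Finally I would conclude dimension counting: the $m$ distinct characters $\chi^h$ of $C$ occurring in $\rho|_C$ give $m$ linearly independent simultaneous eigenvectors (eigenvectors for distinct characters of an abelian group lie in distinct isotypic components and are linearly independent), so $\dim V \geqslant m = |\phi(H)|$, which is the desired bound. The main obstacle—really the only point requiring care—is verifying that the conjugation action of $H$ genuinely produces $m$ \emph{distinct} characters rather than fewer, i.e., tracking the identification $\Aut(\ZZ/q\ZZ) = (\ZZ/q\ZZ)^*$ correctly so that distinct elements of $\phi(H)$ yield distinct characters $\chi^h$; this reduces to the observation that $\zeta_q^{a} = \zeta_q^{b}$ for $a,b \in (\ZZ/q\ZZ)^*$ forces $a = b$ when $\zeta_q$ is a primitive $q$th root of unity. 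Everything else is the standard fact that a faithful representation restricted to a normal subgroup must, by the permutation action on eigenspaces, contain a full orbit of characters.
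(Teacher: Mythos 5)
Your proof is correct and follows essentially the same route as the paper: decompose the faithful representation into characters of the normal cyclic subgroup, observe that faithfulness forces some character to be faithful on $\ZZ/q\ZZ$ (equivalently, nontrivial on the order-$p$ subgroup), and count the $H$-orbit of that character, which has exactly $|\phi(H)|$ elements. The only difference is cosmetic — you phrase the faithful-character step via a primitive $q$th root of unity eigenvalue, the paper via restriction to the unique subgroup of order $p$ — and both are fine.
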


\begin{proof} Suppose $\rho \colon (\ZZ/ q \ZZ ) \rtimes_{\phi} H \to \GL(V)$ is a $d$-dimensional faithful representation. 
Our goal is to show that $d \geqslant |\phi(H)|$. By our assumption on $k$, $V$ as a direct sum of $1$-dimensional character spaces
$V = V_{\chi_1} \oplus \dots \oplus V_{\chi_d}$ for the cyclic group $\ZZ/ q \ZZ$, where the 
characters $\chi_1, \dots, \chi_d$ are permuted by $H$. Since $\rho$ is faithful,
the restriction of one of these characters, say of $\chi_i$, to the unique subgroup of order $p$ in $\ZZ/ q \ZZ$ is non-trivial.
Hence, $\chi_i \colon \ZZ/ q \ZZ \to k^*$ is faithful. This implies that the $H$-orbit of $\chi_i$ has exactly $|\phi(H)|$ elements. 
Thus $d \geqslant |\phi(H)|$, as desired.
\end{proof}

We are now ready to proceed with the proof of Theorem~\ref{thm2}. Set $G_n = (\ZZ/ n \ZZ) \rtimes H_n$.
By~\eqref{e.sequence},
it suffices to show that $\lim_{n \to \infty} \rdim(G_n) = \infty$.
In other words, for any positive real number $R$, we want to show that there are at most finitely many integers $n \geqslant 1$
such that 
\begin{equation} \label{e.rdim}
\rdim (G_n) \leqslant R. 
\end{equation}
Write 
\begin{equation} \label{e.H_n}
H_n = (\ZZ/ q_1 \ZZ)^{a_1} \times \dots \times (\ZZ/ q_r \ZZ)^{a_r}
\end{equation}
as a product of cyclic groups, where $q_1, \dots, q_r$ are distinct prime powers. 

\smallskip
Claim: If~\eqref{e.rdim} holds, then
(a) $a_i \leqslant R$, and (b) $q_i \leqslant R$,
for every $i = 1, \dots, r$. 

\smallskip
Assume for a moment that this claim is established.
For a fixed $R$, there are only finitely many groups of the form
$(\ZZ/ q_1 \ZZ)^{a_1} \times \dots \times (\ZZ/ q_r \ZZ)^{a_r}$
satisfying (a) and (b) (recall that $q_1, \dots, q_r$ are required to be distinct).
Since $\lim_{n \to \infty} \, |H_n| = \infty$, we conclude that the inequality $\rdim(G_n) \leqslant R$ holds 
for only finitely many integers $n \geqslant 1$, as desired.

It remains to prove the claim. For part (a), note that
\[ R \geqslant \rdim(G_n) \geqslant \rdim(H_n) \geqslant \rdim (\ZZ/q_i \ZZ)^{a_i}  = a_i \, . \]
To prove part (b), by symmetry it suffices to show that $q_1 \leqslant R$.
Let $n = p_1^{e_1} \dots p_s^{e_s}$ be the prime decomposition of $n$ and let $\phi_j \colon H_n \to \Aut(\ZZ/ p_j^{e_j} \ZZ)$ 
be the projection of 
\[ H_n \subset \Aut(\ZZ/ n \ZZ) = \Aut(\ZZ/ p_1^{e_1} \ZZ) \times \dots \times \Aut(\ZZ/ p_s^{e_s} \ZZ) \]
to the $j$th factor. Since $q_1$ is a prime power, at least one of
the projections $\phi_j$ maps the first factor $\ZZ/ q_1 \ZZ$ in~\eqref{e.H_n} 
isomorphically onto its image. Thus $G_n$ contains a subgroup isomorphic to
$(\ZZ / p_j^{e_j} \ZZ ) \rtimes_{\phi_j} (\ZZ/ q_1 \ZZ)$ and by Lemma~\ref{lem.rep},
\[ R \geqslant \rdim(G_n) \geqslant \rdim((\ZZ / p_j^{e_j} \ZZ) \rtimes_{\phi_j} (\ZZ/ q_1 \ZZ)) \geqslant |\phi_j(\ZZ / q_1 \ZZ)| = q_1 . \]
This completes the proof of the claim and thus of Theorem~\ref{thm2}.
\qed

\begin{example} \label{ex.brv} Fix a prime $p$. For each $n \geqslant 1$, choose a prime $q_n$ so that $q_n - 1$ is divisible by $p^n$.
There are infinitely many choices of such $q_n$ for each $n$ by Dirichlet's theorem of primes in arithmetic progressions. 
Embed $\ZZ/ p^n \ZZ$ into the cyclic group $(\ZZ/ q_n \ZZ)^*$ 
of order $q_n - 1$ and form the semidirect product $\Gamma_n = (\ZZ/q_n \ZZ_n) \rtimes (\ZZ/ p^n \ZZ)$.

It is shown in~\cite[Example 3.5]{brv} that a conjecture of Ledet implies that $\ed_{\CC} (\Gamma_n) \geqslant n$.
Theorem~\ref{thm2} yields an unconditional proof of a weaker assertion: 
\[ \lim_{n \to \infty} \, \ed_{\CC} (\Gamma_n) =  \infty . \]
As is pointed out in~\cite{brv}, it was not previously known that $\ed_{\CC}(\Gamma_n) > 3$ for any $n$.
\end{example}

\section*{Acknowledgments} The author is grateful to Alexander Duncan and Fei Hu for helpful comments.

\end{document}